\documentclass[12pt]{amsart}
\usepackage{amssymb,amsmath,amsthm,mathrsfs,verbatim}
\oddsidemargin = 1.2cm \evensidemargin = 1.2cm \textwidth = 13.5 cm
\textheight =8.1in

\newcommand{\sm}[4]{\left(\begin{smallmatrix}#1&#2\\ #3&#4 \end{smallmatrix}
\right)}
\newtheorem{theorem}{Theorem}
\newtheorem{lemma}[theorem]{Lemma}
\newtheorem{corollary}[theorem]{Corollary}

\newtheorem{proposition}[theorem]{Proposition}

\theoremstyle{remark}

\newtheorem*{example}{Example}
\numberwithin{theorem}{section} \numberwithin{equation}{section}

\newcommand{\C}{\mathbb{C}}

\newcommand{\Q}{\mathbb{Q}}

\newcommand{\Z}{\mathbb{Z}}
\newcommand{\N}{\mathbb{N}}
\newcommand{\SL}{{\text {\rm SL}}}

\def\H{\mathbb{H}}

\newcommand{\F}{{}_2F_1\left(\frac{1}{2},\frac{1}{2};1,16t\right)}
\newcommand{\Fg}{{}_2F_1\left(\frac{1}{2},\frac{1}{2};1,t\right)}
\newcommand{\Fh}{{}_2F_1\left(a,b;c,t\right)}
\newcommand{\Fl}{{}_2F_1\left(\frac{1}{2},\frac{1}{2};1,16l(\tau)\right)}

\begin{document}
\title[Modular forms, hypergeometric functions and congruences] {Modular forms, hypergeometric functions and congruences}

\author{Matija Kazalicki}
\address{Department of Mathematics, University of Zagreb,
Croatia, Zagreb, Bijeni\v{c}ka cesta 30} \email{mkazal@math.hr}
\subjclass[2010] {11A07, 11B83, 11B65, 11F30}
\keywords{modular forms, hypergeometric functions, congruences}


\begin{abstract}
Using the theory of Stienstra and Beukers \cite{BS},  we prove various elementary congruences for the numbers 
$$\displaystyle \sum_{\substack{i_1,i_2\ldots i_k \ge 0\\ i_1+i_2+\cdots i_k=n}}\binom{2i_1}{i_1}^2\binom{2i_2}{i_2}^2\cdots\binom{2i_k}{i_k}^2, \quad \textrm{ for } k,n\in \N.$$ 
To obtain that, we study the arithmetic properties of Fourier coefficients of certain (weakly holomorphic) modular forms.
\end{abstract}
\maketitle
\section{Introduction and statement of results}

Consider the family of elliptic curves given by the Legendre equation
$$y^2=x(x-1)(x-t),\quad t\in \C,$$
whose period integrals 
$$\Omega_1(t)=\int_{t}^{1}\frac{dx}{\sqrt{x(x-1)(x-t)}}, \quad \Omega_2(t)=\int_1^\infty \frac{dx}{\sqrt{x(x-1)(x-t)}},$$
are solutions of the differential equation of Picard-Fuch type
$$t(t-1)\Omega''(t)+(2t-1)\Omega'(t)+\frac{1}{4}\Omega(t)=0.$$
One finds that $\Omega_2(t)=\pi \, \Fg,$
where $$\Fh=\sum_{n=0}^\infty \frac{(a)_n (b)_n}{(c)_n n!}t^n$$ is the Gauss hypergeometric function.
This further gives identity
\begin{equation}\label{eq}
\theta(\tau)=\Fl=\sum_{n=0}^\infty \binom{2n}{n}^2 l(\tau)^n.
\end{equation}
Throughout the paper, $\tau \in \H$, $q=e^{\pi i \tau}$, $\theta(\tau)=\left(\sum_{n\in \Z}q^{n^2}\right)^2$ is the classical weight $1$ theta series,
and $l(\tau)=q-8q^2+44q^3-192q^4+\cdots$ is the normalized elliptic modular lambda function (hauptmoduln for $\Gamma(2)$). For more details see \cite{Periods}.

In this paper, we identify certain (weakly holomorphic) modular forms  $$f(\tau)=P(l(\tau))\theta^k(\tau) \frac{dl(\tau)}{d\tau},$$ for some $P(t)\in \Z[t]$, and $k \in \N$, whose Fourier coefficients are easily understood in terms of elementary arithmetic (e.g. a spliting behavior of primes in the quadratic extensions). Using identity (\ref{eq}), we exploit the relation (via formal group theory) between the coefficients of the power series $P(t)\F^k$, and Fourier coefficients of $f(\tau)$ to prove some elementary congruences for the numbers
$$\displaystyle A_k(n)=\sum_{\substack{i_1,i_2\ldots i_k \ge 0\\ i_1+i_2+\cdots i_k=n}}\binom{2i_1}{i_1}^2\binom{2i_2}{i_2}^2\cdots\binom{2i_k}{i_k}^2, \quad \textrm{ for } k,n\in \N.$$

Beukers and Stienstra \cite{BS} invented this approach to study congruence properties of Apery numbers
$$B(n)=\sum_{k=0}^\infty \binom{n+k}{k}\binom{n}{k}^2.$$ Using the formal Brauer group of some elliptic K3-surface, they proved that for all primes $p$, and $m,r\in \N$ with $m$ odd
$$B\left(\frac{mp^r-1}{2}\right)-a(p) B\left(\frac{mp^{r-1}-1}{2}\right)+(-1)^{\frac{p-1}{2}}p^2 B\left(\frac{mp^{r-2}-1}{2}\right) \equiv 0 \pmod{p^r},$$
where $\eta^6(4\tau)=\sum_{n=1}^\infty a(n)q^{2n}$.

Many authors have subsequently studied arithmetic properties of $B(n)'s$ and discovered similar three term congruence relations for other Apery like numbers. For related work see  \cite{Beukers, Verrill2,mccarthy, osburn,Verrill1, Zagier}. 

In contrast to these result, the novelity of this paper is that we use families of modular forms, as well as the weakly holomorphic modular form to extract information about congruence properties of numbers $A_k(n)$. In particular, even though Foureier coefficients of weakly holomorphic modular forms do not satisfy three term relation satisfied by coefficients of Hecke eigenforms, they sometimes satisfy three term congruence relation of Atkin and Swinnerton-Dyer type (see \cite{Matija}), which then give rise to the three term congruence relations satisfied by corresponding Apery like numbers.

Let $\Delta$ be the free subgroup of $\SL_2(\Z)$ generated by
the matrices $A=\sm 1 2 0 1$ and $B=\sm 1 0 2 1$. Note that
$\Gamma(2)=\{\pm I\}\Delta$, and that $\Gamma_1(4)={\sm {1/2} 0 0 1}\Delta{\sm {1/2} 0 0 1}^{-1}$. 
Divisors of $\theta(\tau)$, $l(\tau)$, and $1-16l(\tau)$ are suported at cusps. For an integer $k$, we denote by $M_k(\Delta)$ and $S_k(\Delta)$ the spaces of modular forms and cusp forms of weight $k$ for group $\Delta$.

First we identify some (weakly holomorphic) modular forms for $\Delta$ that have ``simple'' Fourier coefficients.

\begin{theorem}\label{thm:1}\mbox{}

For $n\in \N$, let
$$h_{n}(\tau)=\theta(\tau)^{6n+1}(1-16l(\tau))^{\lfloor\frac{n+1}{2}\rfloor}l(\tau)^{2n}=\sum_{m=1}^\infty a_n(m)q^m.$$ 
Then $h_{n}(\tau)\in S_{6n+1}(\Delta)$, and for a prime $p\equiv (-1)^{n+1} \pmod{4}$, we have that $a_n(p)=0$.
\end{theorem}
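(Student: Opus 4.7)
The plan is to prove the statement in two stages: verify that $h_n$ is a cusp form by a divisor computation at the three cusps of $\Delta$, then derive the coefficient vanishing from an explicit factorization of $h_n$ in terms of classical Jacobi theta functions $\vartheta_2, \vartheta_3, \vartheta_4$ (so that in particular $\theta = \vartheta_3^2$).

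For the cusp-form claim, I would compute the order of vanishing of each factor of $h_n$ at $\infty, 0, 1$. From $l = \lambda/16$ and the standard transformations $\lambda(-1/\tau) = 1 - \lambda$ and $\lambda(\tau+1) = \lambda/(\lambda-1)$, one reads off that $l$ has a simple zero at $\infty$ and a pole at $1$, while $1-16l$ has a simple zero at $0$ and a pole at $1$; and from the Jacobi transformations of $\vartheta_3$, that $\theta$ is nonvanishing at $\infty$ and $0$ and has a zero at $1$. Combining with the exponents $6n+1,\lfloor(n+1)/2\rfloor, 2n$ and the cusp widths of $\Delta$, one verifies that $h_n$ vanishes to positive order at each cusp (the orders at $\infty$ and $0$ are $2n$ and $\lfloor(n+1)/2\rfloor$, and the order at $1$, in $\Delta$'s local coordinate, is $2n+1-2\lfloor(n+1)/2\rfloor$), hence $h_n\in S_{6n+1}(\Delta)$ for $n\geq 1$.

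The heart of the proof is to establish the closed form
$$h_n(\tau)\;=\;\vartheta(\tau)^2\,\eta(2\tau)^{12n},\qquad \vartheta=\vartheta_3\ (n\text{ even}),\ \vartheta=\vartheta_4\ (n\text{ odd}).$$
Two classical theta-product identities underpin this. From Jacobi's identity $\vartheta_2\vartheta_3\vartheta_4=2\eta^3$ together with $16l=\vartheta_2^4/\vartheta_3^4$ and $1-16l=\vartheta_4^4/\vartheta_3^4$, one obtains
$$\theta^6\,l\,(1-16l)\;=\;\eta^{12},$$
which lets one eliminate the factor $(1-16l)^{\lfloor(n+1)/2\rfloor}$ from $h_n$ at the cost of introducing a power of $\eta$. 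Then the infinite-product identity $\vartheta_2(\tau)\eta(\tau)=2\eta(2\tau)^2$ consolidates the remaining $\vartheta_2$- and $\eta$-factors into $\eta(2\tau)^{12n}$. Treating the cases $n=2m$ and $n=2m+1$ separately and tracking the powers of $2$ yields the claimed formula.

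From the closed form the coefficient vanishing is a short mod-$4$ observation. The expansion $\eta(2\tau)^{12n}=q^{2n}\prod_{k\geq 1}(1-q^{4k})^{12n}$ is supported on exponents $m\equiv 2n\pmod 4$, while $\vartheta_3(\tau)^2$ and $\vartheta_4(\tau)^2$ are supported on $m\equiv 0,1,2\pmod 4$ (a sum of two squares is never $\equiv 3\pmod 4$). Hence $a_n(m)$ vanishes unless $m\pmod 4\in\{2n,2n+1,2n+2\}$; the missing residue is $2n+3\equiv(-1)^{n+1}\pmod 4$, so for any odd prime $p\equiv(-1)^{n+1}\pmod 4$ we conclude $a_n(p)=0$. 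The main obstacle is identifying and verifying the factorization $h_n=\vartheta^2\eta(2\tau)^{12n}$; once in hand, the cusp-form property and the coefficient vanishing both reduce to routine bookkeeping and an elementary parity check.
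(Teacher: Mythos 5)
Your proof is correct, and at its core it runs on the same engine as the paper's: write $h_n$ as (a weight-one theta factor) $\times$ (a factor supported on a single residue class mod $4$), then invoke the fact that a sum of two squares is never $\equiv 3\pmod 4$. The difference is in how that second factor is identified. The paper decomposes $h_n$ as $\theta\cdot\Psi^{2k}$ (resp.\ $h_1\cdot\Psi^{2k}$) with $\Psi=(1-16l)^{1/2}l^2\theta^6$, gets the $2\bmod 4$ support of $\Psi$ by quoting that $\Psi$ is a newform in $S_6(\Gamma_0(4))$ with $a(2)=0$ (Lemma \ref{lemma:1}), and disposes of the leftover half-integral power of $1-16l$ in the odd-$n$ case via the substitution $\tau\mapsto\tau+1$ and Lemma \ref{lemma:3}. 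You instead establish the explicit factorization $h_n=\vartheta^2\,\eta(2\tau)^{12n}$ with $\vartheta=\vartheta_3$ or $\vartheta_4$ according to the parity of $n$ (equivalently, $\Psi=\eta(2\tau)^{12}$), using only Jacobi's identity $\vartheta_2\vartheta_3\vartheta_4=2\eta^3$ and $\vartheta_2\eta=2\eta(2\tau)^2$; I checked the exponent bookkeeping in both parity cases and it works out. This buys two things: the $2n\bmod 4$ support is read off directly from the product $q^{2n}\prod_{k\ge 1}(1-q^{4k})^{12n}$ with no appeal to newform theory, and the $\tau\mapsto\tau+1$ trick is replaced by the equivalent but more transparent observation that $(1-16l)^{1/2}=\vartheta_4^2/\vartheta_3^2$ merely trades $\vartheta_3^2$ for $\vartheta_4^2$ (these are the same move, since $\vartheta_4(\tau)=\vartheta_3(\tau+1)$). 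Your divisor computation at the cusps is consistent and fills in what the paper leaves to the reader. The one thing to tighten before this stands on its own: the closed form $h_n=\vartheta^2\eta(2\tau)^{12n}$ is the load-bearing step and is only sketched, so the case-by-case verification of the exponents (which is routine but not omittable) should be written out.
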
 
 
\begin{theorem}\label{thm:2}\mbox{}
\begin{itemize}
\item[a)] The modular form $$f_1(\tau)=l(\tau)(1-16l(\tau))\theta(\tau)^5=\sum_{n=1}^\infty b_1(n)q^n\in M_5(\Delta)$$ is the cusp form with complex multiplication by $\Q(i)$. In particular, for a prime $p>2$ we have 
\begin{equation*} 
b_1(p)= \left\{
\begin{array}{ll}
2x^4-12x^2 y^2+2y^4 & \textrm{ if }p\equiv 1 \pmod{4}, \textrm{ and } p=x^2+y^2,\\
0 & \textrm{ if } p \equiv 3 \pmod{4}. \\
\end{array} \right.
\end{equation*}

\item[b)] For an integer $n>2$, the Fourier coefficients $c_1(n)$ of the weakly holomorphic modular form $g_1(\tau)=l(\tau)^2(1-16l(\tau))^2\theta(\tau)^5=\sum_{n=2}^\infty c_1(n)q^n$ satisfy the following congruence relation
$$b_1(n)\equiv 108 c_1(n) \pmod{n^3}.$$
\item[c)] For integer $n>1$, let
$$f_{n}(\tau)=\theta(\tau)^{6n-6}(1-16l(\tau))^{\lfloor\frac{n-1}{2}\rfloor}l(\tau)^{2n-2}f_1(\tau)=\sum_{m=1}^\infty b_{n}(m)q^m.$$ 
Then for a prime $p\equiv (-1)^{n} \pmod{4}$, we have that $b_{n}(p)= 0$.
 
\end{itemize}
\end{theorem}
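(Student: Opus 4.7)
For part (a), I would first check $f_1\in S_5(\Delta)$: the weight $5$ comes entirely from $\theta^5$, and positive orders of vanishing at each of the three cusps for $\Delta$ are immediate from the known orders of $\theta$, $l$, and $1-16l$ there (each vanishes at a single cusp). Conjugation by $\sm{1/2}{0}{0}{1}$ identifies $S_5(\Delta)$ with $S_5(\Gamma_1(4))$, which is one-dimensional and spanned by the CM newform attached to the Hecke character $\psi$ on $\Q(i)$ given on principal ideals by $\psi((\alpha))=\alpha^4$ (well-defined because the unit group $\{\pm 1,\pm i\}$ kills fourth powers). A comparison of a few initial Fourier coefficients identifies $f_1$ with this form (up to scaling). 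The formula $b_1(p)=(x+iy)^4+(x-iy)^4=2(x^4-6x^2y^2+y^4)$ at split primes $p=x^2+y^2$ is then the standard CM Hecke-eigenvalue identity, and $b_1(p)=0$ for $p\equiv 3\pmod 4$ reflects these primes being inert in $\Z[i]$.

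For part (b), the central ingredient is the identity
\begin{equation*}
\frac{1}{\pi i}\frac{dl}{d\tau}=l(\tau)(1-16l(\tau))\theta(\tau)^2,
\end{equation*}
verifiable by $q$-expansion or derivable from the Picard--Fuchs equation for $\Omega_2(l)$. This gives $f_1=\theta^3\cdot\tfrac{1}{\pi i}\tfrac{dl}{d\tau}$ and $g_1=l(1-16l)f_1=\theta^3\cdot l(1-16l)\cdot\tfrac{1}{\pi i}\tfrac{dl}{d\tau}$. Passing to the Stienstra--Beukers formal-group picture under $t=l(\tau)$, the two forms correspond to the power series $F_1(t)=\F^3=\sum_{m\ge 0}A_3(m)t^m$ and $G_1(t)=t(1-16t)\F^3$, and integrating the relations $\pi i\,f_1\,d\tau=F_1(l)\,dl$ and $\pi i\,g_1\,d\tau=G_1(l)\,dl$ from the cusp $i\infty$ yields the Lambert-type identities
\begin{equation*}
\sum_{n\ge 1}\frac{b_1(n)}{n}q^n=\sum_{m\ge 0}\frac{A_3(m)}{m+1}\,l(\tau)^{m+1},\qquad\sum_{n\ge 2}\frac{c_1(n)}{n}q^n=\sum_{m\ge 1}\frac{A_3(m-1)-16A_3(m-2)}{m+1}\,l(\tau)^{m+1}.
\end{equation*}
The congruence $b_1(n)\equiv 108\,c_1(n)\pmod{n^3}$ would then follow by combining these identities with the three-term Atkin--Swinnerton-Dyer congruence of \cite{Matija} applied to the CM eigenform $f_1$ and its weakly holomorphic companion $g_1$. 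The hardest step is extracting the precise constant $108$ together with the sharp $n^3$ divisibility; this presumably demands a careful analysis of the $p$-adic valuations of the formal-group logarithm coefficients of the Legendre curve over $\Z[l]$, combined with the explicit CM structure established in part~(a).

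For part (c), multiplying out gives $f_n=\theta^{6n-1}(1-16l)^{\lfloor(n+1)/2\rfloor}l^{2n-1}$, so that $f_n=h_{n-1}\cdot\theta^4\,l\cdot(1-16l)^{\varepsilon_n}$ with $\varepsilon_n=1$ for $n$ odd and $\varepsilon_n=0$ for $n$ even, placing $f_n$ in the same structural family as the $h_n$ of Theorem \ref{thm:1}. Crucially, the target vanishing set $\{p\equiv(-1)^n\pmod 4\}$ for $b_n(p)$ coincides with the vanishing set $\{p\equiv(-1)^{(n-1)+1}\pmod 4\}$ for $a_{n-1}(p)$ supplied by Theorem \ref{thm:1}. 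I would therefore adapt the technique used to prove Theorem \ref{thm:1} -- identifying $f_n$ as a theta-type eigenform attached to a Hecke character of $\Q(i)$, or equivalently as an element of a CM isotypic subspace of $S_{6n-1}(\Delta)$ -- and then read off the vanishing at primes of the required residue class. The main obstacle is establishing this CM/theta identification for each $n$; once $f_n$ is placed in the same arithmetic framework as $h_{n-1}$, the vanishing at primes $p\equiv(-1)^n\pmod 4$ is immediate.
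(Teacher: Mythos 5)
Your part (a) is essentially the paper's argument: check vanishing at the cusps, use the dimension formula to get $\dim S_5(\Delta)=1$, and identify $f_1$ with the CM newform attached to the weight-four Hecke character of $\Q(i)$; your computation $(x+iy)^4+(x-iy)^4=2x^4-12x^2y^2+2y^4$ is correct and this part is fine.

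Part (b) has a genuine gap. The assertion $b_1(n)\equiv 108\,c_1(n)\pmod{n^3}$ is a two-term congruence valid for every $n>2$, and it cannot be extracted from Atkin--Swinnerton-Dyer three-term congruences, which only compare coefficients along a tower $mp^r$ modulo powers of a single prime; you concede yourself that your route does not produce the constant $108$ or the modulus $n^3$. The paper's mechanism is entirely different and direct: by Bol's theorem $D^4(\theta(\tau)^{-3})$, with $D=q\,d/dq$, is a weakly holomorphic modular form of weight $5$ for $\Delta$, and comparing finitely many Fourier coefficients yields the exact identity $-\tfrac{1}{12}D^4(\theta^{-3})=f_1-108\,g_1$ (Lemma \ref{lemma:4}). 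Since $\theta^{-3}=\sum d_mq^m$ has integral coefficients, this gives $b_1(n)-108c_1(n)=-n^4d_n/12$, from which the divisibility by $n^3$ is immediate. Without this identity, or some substitute giving a closed formula for $f_1-108g_1$, your argument does not close. (Your Lambert-type integrated identities are correct, but they are the input to Corollary \ref{cor:1} via Beukers' proposition, not to part (b).)

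Part (c) also fails as proposed. For $n\ge 2$ the forms $f_n$ are products of eigenforms and are not Hecke eigenforms, so the plan of ``identifying $f_n$ as a theta-type eigenform attached to a Hecke character of $\Q(i)$'' cannot be carried out; moreover, knowing only that $a_{n-1}(p)=0$ at certain primes (the conclusion of Theorem \ref{thm:1}) gives no control over the coefficients of $f_n=h_{n-1}\cdot\theta^4l(1-16l)^{\varepsilon_n}$ after multiplication by a series supported in several residue classes. What the paper actually uses, and what you would need, is the stronger \emph{support} statement behind Theorem \ref{thm:1}: the newform $\Psi=(1-16l)^{1/2}l^2\theta^6$ of Lemma \ref{lemma:1} has Fourier expansion supported on exponents $\equiv 2\pmod 4$, hence $\nu=\Psi^2=\theta^{12}(1-16l)l^4$ is supported on $0\pmod 4$. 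Writing $f_{2k+1}=f_1\nu^{k}$ and using that the odd-index coefficients of the CM form $f_1$ are supported on $1\pmod 4$ handles odd $n$, while the even case follows from $f_{2k}(\tau)=-\bigl(\Psi^{2k-1}f_1\bigr)(\tau+1)$ together with Lemma \ref{lemma:3}, since $\Psi^{2k-1}f_1$ has no coefficients in the class $1\pmod 4$ and the shift $\tau\mapsto\tau+1$ preserves supports. Your proposal does not supply this support argument, which is the actual content of the proof.
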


\begin{corollary}\label{cor:1} Let $p>3$ be a prime, and $r \in \N$. If $p \equiv 1 \pmod 4$, let $x$ and $y$ be integers such that $p=x^2+y^2$. Denote by $D_3(n)=A_3(n-1)-16A_3(n-2).$ Then the following congruences hold
\begin{equation}\label{eq:3} A_3(mp^r-1)-b_1(p)A_3(mp^{r-1}-1)+\left(\frac{-1}{p}\right)p^4A_3(mp^{r-2}-1)\equiv 0 \pmod{p^r},
\end{equation}

\begin{equation}\label{eq:4}
D_3(mp^r-1)-b_1(p)D_3(mp^{r-1}-1)+\left(\frac{-1}{p}\right)p^4D_3(mp^{r-2}-1)\equiv 0 \pmod{p^{r-\frac{\left(\frac{-1}{p}\right)+1}{2}}}.
\end{equation}

In particular, we have

\begin{equation}\label{eq:1}
   A_3(p-1)\equiv \left\{
	\begin{array}{ll}
		16x^4 \pmod{p}& \textrm{ if }p\equiv 1 \pmod{4},\\
	0\pmod{p} & \textrm{ if } p \equiv 3 \pmod{4}. \\
	\end{array} \right.
\end{equation}

	\begin{equation}\label{eq:2} 
   D_3(p-1)\equiv \left\{
	\begin{array}{ll}
		\frac{4}{27}x^4\pmod{p} & \textrm{ if }p\equiv 1 \pmod{4},\\
	0 \pmod{p} & \textrm{ if } p \equiv 3 \pmod{4}. \\
	\end{array} \right.
\end{equation}
\end{corollary}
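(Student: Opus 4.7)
The fundamental identity I would establish first is the Picard--Fuchs relation
$$q\,\frac{dl}{dq} = l(1-16l)\,\theta^2,$$
which follows from the Gauss hypergeometric equation satisfied by $\theta = {}_2F_1(1/2,1/2;1;16l)$ (and can be verified directly by comparing the first few $q$-coefficients, since both sides lie in the same one-dimensional space of weight-$2$ forms attached to $X(\Delta)$). This lets me rewrite the forms of Theorem~\ref{thm:2} as
$$f_1 = \theta^3\cdot q\,\frac{dl}{dq} \quad\text{and}\quad g_1 = l(1-16l)\,\theta^3\cdot q\,\frac{dl}{dq},$$
so that the associated differentials are
$$f_1\,\frac{dq}{q} = \theta^3\,dl \quad\text{and}\quad g_1\,\frac{dq}{q} = l(1-16l)\,\theta^3\,dl.$$
By identity~(\ref{eq}), $\theta^3 = \sum_{n\ge 0} A_3(n)\,l^n$, and consequently $l(1-16l)\,\theta^3 = \sum_{m\ge 1} D_3(m)\,l^m$; so the $l$-expansions of these two differentials encode exactly the sequences $A_3(n)$ and $D_3(m)$.

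The three-term congruence~\eqref{eq:3} is then a direct application of the Beukers--Stienstra formal-group theorem \cite{BS} to the pair $(f_1,\theta^3\,dl)$. By Theorem~\ref{thm:2}(a), $f_1\in S_5(\Delta)$ is a CM Hecke eigenform of weight $5$ with eigenvalue $b_1(p)$ and nebentypus $\chi(p)=\left(\tfrac{-1}{p}\right)$, so the BS congruence on the formal-group coefficients $A_3(n-1)$ yields~\eqref{eq:3} verbatim (with $p^{k-1}=p^4$). The congruence~\eqref{eq:4} comes from the same formalism applied to $(g_1, l(1-16l)\,\theta^3\,dl)$: the formal-group coefficients are now $D_3(m)$, but because $g_1$ is only weakly holomorphic one cannot invoke BS directly. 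Instead, the comparison $b_1(n)\equiv 108\,c_1(n)\pmod{n^3}$ from Theorem~\ref{thm:2}(b) provides the bridge, identifying the relevant ``eigenvalue'' with $b_1(p)/108$ modulo $p^3$; after clearing the factor $108$ one recovers~\eqref{eq:4}, the loss of one power of $p$ in the split case arising precisely from the need to clear $108$ when $b_1(p)\not\equiv 0\pmod p$, whereas in the inert case $b_1(p)=0$ kills the middle term and no precision is lost.

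For the closed-form residues~\eqref{eq:1} and~\eqref{eq:2}, set $r=m=1$. Using $A_3(0)=1$, \eqref{eq:3} collapses to $A_3(p-1)\equiv b_1(p)\pmod p$. When $p=x^2+y^2$, the relation $y^2\equiv -x^2\pmod p$ reduces Theorem~\ref{thm:2}(a)'s formula to
$$b_1(p)=2x^4-12x^2y^2+2y^4\equiv 2x^4+12x^4+2x^4 = 16x^4\pmod p,$$
proving~\eqref{eq:1}; the inert case is immediate from $b_1(p)=0$. The analogous reduction applied to the $g_1$-version produces~\eqref{eq:2}, with the constant $4/27=16/108$ inherited from Theorem~\ref{thm:2}(b). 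The principal technical obstacle is the rigorous transfer of the Beukers--Stienstra theorem from the cusp form $f_1$ to the weakly holomorphic $g_1$: the careful tracking of $p$-adic valuations needed to verify the split/inert dichotomy and the precise exponent $r-(\chi(p)+1)/2$ of~\eqref{eq:4} requires more than a naive substitution, and this is where the argument is most delicate.
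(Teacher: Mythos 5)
Your handling of (\ref{eq:3}), (\ref{eq:1}) and (\ref{eq:2}) is essentially the paper's argument: Lemma \ref{lemma:5} turns $f_1\,dq/q$ into $\theta^3\,dl$, Proposition \ref{proposition:Beukers} (with $t(q)=16l$, whose leading coefficient is a $p$-adic unit, so the converse direction applies) transfers the exact Hecke recursion of the CM eigenform $f_1$ to the coefficients $A_3(n-1)$, and the substitution $y^2\equiv -x^2\pmod p$ gives $b_1(p)\equiv 16x^4$; your specialization to $r=m=1$ is the paper's Lemma \ref{cor:Beukers}.

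The gap is in your justification of (\ref{eq:4}), which you yourself flag as not carried out. Two concrete points. First, the three-term recursion is linear and homogeneous in the sequence, so if $108\,c_1(n)\equiv b_1(n)$ then $c_1$ inherits the recursion with the \emph{same} $\alpha_p=b_1(p)$; there is no ``eigenvalue $b_1(p)/108$,'' and since $108$ is a $p$-adic unit for $p>3$, clearing that factor costs nothing and cannot be the source of the lost power of $p$. The actual source is that Theorem \ref{thm:2}(b) controls $b_1(n)-108\,c_1(n)$ only modulo $n^3$, so the middle term $b_1(p)\cdot 108\,c_1(mp^{r-1})$ agrees with $b_1(p)\,b_1(mp^{r-1})$ only modulo $p^{3(r-1)}$, which for $r=1$ says nothing at all. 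Hence when $p\equiv 1\pmod 4$ (so that $b_1(p)$ need not vanish mod $p$) the recursion for $c_1(n)$ can only be established modulo $p^{r-1}$ uniformly in $r$; the paper repairs this by feeding the sequences $p\,c_1(n)$ and $p\,D_3(n-1)$ into Proposition \ref{proposition:Beukers}, and that is exactly what produces the exponent $r-\bigl(\bigl(\tfrac{-1}{p}\bigr)+1\bigr)/2$. When $p\equiv 3\pmod 4$ the problematic middle term disappears because $b_1(p)=0$ (your observation here is correct), and the remaining discrepancies are of order $p^{3r}$ and $p^{3r-2}$, both at least $p^{r}$, so no precision is lost. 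With this bookkeeping supplied, your outline closes and coincides with the paper's proof.
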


For integers $n\ge 1$ and $m\ge 0$, define the sequences $B_n(m)$ and $C_n(m)$ by the following identities
$$(1-16t)^{\lfloor \frac{n-1}{2}\rfloor}t^{2n-1} \F^{6n-1}=\sum_{m=0}^\infty B_n(m) t^m,$$
$$(1-16t)^{\lfloor \frac{n-1}{2} \rfloor}t^{2n-2}\F^{6n-3}=\sum_{m=0}^\infty C_n(m)t^m.$$

\begin{corollary}\label{cor:2}
For $n\in \N$ and a prime $p>2$, we have that $B_n(p-1)\equiv 0 \pmod{p}$, if $p \equiv (-1)^{n+1} \pmod{4}$. Moreover, $C_n(p-1)\equiv 0 \pmod{p}$ if $p \equiv (-1)^n \pmod{4}$.
\end{corollary}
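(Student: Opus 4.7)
The plan is to realize $B_n(m)$ and $C_n(m)$ as $l(\tau)$-expansion coefficients of certain (weakly holomorphic) modular objects and connect these to the $q$-expansions of $h_n$ and $f_n$ from Theorems \ref{thm:1} and \ref{thm:2}. Using \eqref{eq}, substituting $t=l(\tau)$ in the defining series of $B_n(m)$ and $C_n(m)$ gives
\[
\Phi_n(\tau):=(1-16l)^{\lfloor (n-1)/2\rfloor}l^{2n-1}\theta^{6n-1}=\sum_{m\ge 0}B_n(m)\,l(\tau)^m,
\]
\[
\Psi_n(\tau):=(1-16l)^{\lfloor (n-1)/2\rfloor}l^{2n-2}\theta^{6n-3}=\sum_{m\ge 0}C_n(m)\,l(\tau)^m.
\]

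The bridge between the $l$-expansion and $q$-expansion sides is the classical derivative identity $\frac{1}{\pi i}\frac{dl}{d\tau}=l(1-16l)\theta^2$, a weight-$2$ modular form on $\Delta$ (obtained from the standard $\frac{d\lambda}{d\tau}=i\pi\lambda(1-\lambda)\theta_3^4$ by $\lambda=16l$, easily verified by comparing leading $q$-coefficients). A short exponent check --- using $\lfloor(n-1)/2\rfloor+1=\lfloor(n+1)/2\rfloor$ for all $n\ge 1$ --- then gives the key factorizations
\[
h_n=\frac{1}{\pi i}\frac{dl}{d\tau}\cdot\Phi_n,\qquad f_n=\frac{1}{\pi i}\frac{dl}{d\tau}\cdot\Psi_n.
\]

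To extract the congruence, I rewrite $\frac{1}{\pi i}\frac{dl}{d\tau}=q\frac{dl}{dq}$ and expand
\[
h_n=\sum_{m\ge 0}\frac{B_n(m)}{m+1}\,q\frac{d(l^{m+1})}{dq}=\sum_{k\ge 1}q^k\sum_{m\ge 0}\frac{k\,B_n(m)}{m+1}[q^k](l^{m+1}).
\]
Reading off the $q^p$-coefficient,
\[
a_n(p)=\sum_{m\ge 0}\frac{p\,B_n(m)}{m+1}[q^p](l^{m+1}).
\]
The $m=p-1$ term equals $B_n(p-1)$ since $l^p=q^p+O(q^{p+1})$; the terms with $m\ge p$ vanish; and for $0\le m<p-1$ the rational $\frac{p\,B_n(m)}{m+1}[q^p](l^{m+1})$ lies in $p\Z_{(p)}$, because its denominator $m+1$ is coprime to $p$ while $B_n(m)$ and $[q^p](l^{m+1})$ are integers. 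Therefore $a_n(p)\equiv B_n(p-1)\pmod{p}$, and the identical argument applied to $\Psi_n$ and $f_n$ yields $b_n(p)\equiv C_n(p-1)\pmod{p}$.

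The corollary follows immediately: Theorem \ref{thm:1} gives $a_n(p)=0$ for $p\equiv(-1)^{n+1}\pmod 4$, establishing the first vanishing, while Theorem \ref{thm:2}(c) (together with Theorem \ref{thm:2}(a) in the case $n=1$) gives $b_n(p)=0$ for $p\equiv(-1)^{n}\pmod 4$, establishing the second. The main obstacle I anticipate is purely notational: keeping track of the floor-function exponents so as to verify cleanly the factorizations $h_n=\frac{1}{\pi i}\frac{dl}{d\tau}\Phi_n$ and $f_n=\frac{1}{\pi i}\frac{dl}{d\tau}\Psi_n$; once these are aligned, the coefficient comparison is a one-line calculation.
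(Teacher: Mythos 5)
Your proof is correct and follows essentially the same route as the paper: the paper applies its Lemma \ref{cor:Beukers} (whose proof is precisely your inline coefficient computation reading off the $q^p$-coefficient) to the differential forms $\Phi_n\,dl=h_n\,dq/q$ and $\Psi_n\,dl=f_n\,dq/q$, using Lemma \ref{lemma:5} for $D(l)=l(1-16l)\theta^2$, and then invokes $a_n(p)=b_n(p)=0$ from Theorems \ref{thm:1} and \ref{thm:2}. The only difference is that you re-derive that coefficient lemma in place rather than citing it.
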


\begin{example} Let $p>2$ be a prime. Consider the coefficients of $\F^2$. The corresponding modular form $$l(\tau)(1-16l(\tau))\Fl^4=\sum_{k=1}^\infty (-1)^k \left( \sigma_3(k/2)-\sigma_3(k)\right)q^k$$ is an Eisenstein series whose $p^{th}$ Fourier coefficient is $\equiv 1 \bmod{p}$ (if $k$ is odd, we define $\sigma_3(k/2)$ to be $0$). Hence Lemma \ref{cor:Beukers} implies
$$\sum_{k=0}^{p-1} \binom{2k}{k}^2\binom{2(p-1-k)}{p-1-k}^2\equiv 1 \pmod{p},$$
or equivalently $\displaystyle\binom{p-1}{\frac{p-1}{2}}^4\equiv 1 \pmod{p}.$
\end{example}


\section{Acknowledgements}

I would like to thank Max Planck Institute for Mathematics in Bonn for the excelent working environment and the financial support that they provided for me during my stay in March of 2012.

\section{Modular forms for $\Delta$}
\subsection{Preliminaries}\label{prelim}
Let 
\begin{equation} \label{Dedekind}
\eta(\tau)=q^{\frac{1}{12}}\prod_{n=1}^\infty (1-q^{2n})
\end{equation}
be a Dedekind eta function (recall $q=e^{\pi i \tau}$).
As we mentioned in the introduction

\begin{equation}\label{eq:l}
l(\tau)=\frac{\eta(2\tau)^{16}\eta(\tau/2)^8}{\eta(\tau)^{24}} \quad\textrm{ and}
\end{equation}

\begin{equation}\label{eq:1-16l}
1-16l(\tau)=\frac{\eta(\tau/2)^{16}\eta(2\tau)^8}{\eta(\tau)^{24}}
\end{equation} are modular functions for $\Delta$. They are holomorphic
on $\H$, and $l(\tau)\ne 0,1/16$ for all $\tau
\in \H$.

Modular curve $X(2)$ has three cusps: $0$, $1$, and $\infty$. As functions on $X(2)$, $l(\tau)$ and $1-16l(\tau)$ have simples poles at $\infty$ and zeros of order $1$ at cusps $0$ and $1$ respectively. 

It is well known that 

\begin{equation}\label{eq:theta}
\theta(\tau)=\frac{\eta(\tau)^{10}}{\eta(\tau/2)^4\eta(2\tau)^4}
\end{equation}
is a modular form of weight 1 for $\Delta$. It has a zero at cusp $1$ of order $1/2$ (cusp $1$ is irregular). 

We will later need the following dimension formula for the spaces of cusp forms.
Let $\Gamma$ be a finite index subgroup of $\SL_2(\Z)$ of genus $g$ such that $-I\notin \Gamma$. For $k$ odd, \cite[Theorem 2.25]{Shimura} gives the following formula for the dimension of $S_k(\Gamma)$
\begin{equation}\label{eqn:dim}
\dim S_k(\Gamma) = (k-1)(g-1)+\frac{1}{2}(k-2)r_1+\frac{1}{2}(k-1)r_2+\sum_{i=1}^{j}\frac{e_i-1}{2e_i},
\end{equation}
where $r_1$ is the number of regular cusp, $r_2$ is the number of irregular cusps, and $e_i's$ are the orders of elliptic points. Since $\Delta$ has no elliptic points (it is a free group), it follows that $\dim S_5(\Delta)=1$. 

\subsection{Identities}
In this subsection we list some technical facts and identities which will be used later in the proofs of the theorems. The proofs of these statements are straightforward, so detail are ommited. 
\begin{lemma}\label{lemma:1}
	Let $$\displaystyle\Psi(\tau)=(1-16l(\tau))^{1/2}l^2(\tau)\theta(\tau)^6=q^2+\sum_{m=2}^\infty a(m)q^{2m}.$$ Then, 
	$\Psi(\tau) \in S_6(\Gamma_0(4))$ is a newform.
	Moreover $a(2)=0$, 
	hence the coefficients of the Fourier expansion of $\Psi(\tau)$ in $q$ are supported at integers congruent to $2 \bmod{4}$. 
\end{lemma}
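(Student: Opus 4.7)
The strategy is to recognize $\Psi(\tau)$ as the classical eta product $\eta(2\tau)^{12}$ and then invoke standard properties of this newform.

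First, I would substitute the $\eta$-quotient identities (\ref{eq:l}), (\ref{eq:1-16l}), (\ref{eq:theta}) into the definition of $\Psi$ and tally the exponents of $\eta(\tau/2)$, $\eta(\tau)$, $\eta(2\tau)$ in each factor. Writing the triple of exponents as $(a,b,c)$, the factor $l(\tau)^2$ contributes $(16,-48,32)$, the factor $(1-16l(\tau))^{1/2}$ contributes $(8,-12,4)$ (using the square-root branch with constant term $+1$), and $\theta(\tau)^6$ contributes $(-24,60,-24)$. Summing componentwise gives $(0,0,12)$, hence
$$\Psi(\tau)=\eta(2\tau)^{12}.$$

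Next, I would verify that $\eta(2\tau)^{12}\in S_6(\Gamma_0(4))$ via the usual Ligozat criteria for eta products: the weight is $12/2=6$, both congruences $\sum \delta_i r_i\equiv 0\pmod{24}$ and $\sum (N/\delta_i) r_i\equiv 0\pmod{24}$ hold (each sum equals $24$ with $N=4$), and the order of vanishing is positive at every cusp of $\Gamma_0(4)$. The dimension formula for $\Gamma_0(4)$ (genus $0$, three regular cusps, no elliptic points) yields $\dim S_6(\Gamma_0(4))=1$, so this one-dimensional space is spanned by a single newform, which is necessarily $\eta(2\tau)^{12}$.

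Finally, the product expansion
$$\eta(2\tau)^{12}=q^2\prod_{n\ge 1}(1-q^{4n})^{12}$$
shows that the $q$-expansion sits in $q^2\,\Z[[q^4]]$, so every nonzero Fourier coefficient occurs at an exponent $\equiv 2\pmod 4$. In particular the coefficient of $q^4$ is zero, giving $a(2)=0$. The proof is essentially formal once the eta identity is in hand; the only step requiring real attention is the bookkeeping of the nine exponents, together with the choice of the correct branch for $(1-16l)^{1/2}$, which is pinned down by the requirement $\Psi=q^2+O(q^4)$.
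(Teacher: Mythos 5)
Your proposal is correct, and it supplies exactly the details the paper omits: Lemma \ref{lemma:1} appears in the list of ``technical facts and identities'' whose proofs the author declares straightforward and leaves out, and the identification $\Psi(\tau)=\eta(2\tau)^{12}$ via the exponent bookkeeping on the quotients (\ref{eq:l}), (\ref{eq:1-16l}), (\ref{eq:theta}) is evidently the intended route. Your exponent tallies $(16,-48,32)$, $(8,-12,4)$, $(-24,60,-24)$ are right, the branch of the square root is correctly pinned down by the constant term, and deducing the support condition directly from $\eta(2\tau)^{12}=q^2\prod_{n\ge 1}(1-q^{4n})^{12}\in q^2\,\Z[[q^4]]$ is cleaner than the paper's phrasing, which infers it from $a(2)=0$ together with multiplicativity of newform coefficients. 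The only point worth adding a line for is the word \emph{newform}: $\dim S_6(\Gamma_0(4))=1$ alone shows the space is spanned by one normalized eigenform, and you should note that it is genuinely new because $S_6(\SL_2(\Z))=S_6(\Gamma_0(2))=0$, so there are no oldforms at level $4$. With that remark included, the argument is complete.
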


\begin{lemma}\label{lemma:2}
The following identity holds
$$\displaystyle\frac{\eta(\tau/2+1/2)}{\eta(\tau/2)}=\frac{\eta(\tau)^3}{\eta(\tau/2)^2\eta(2\tau)}.$$
\end{lemma}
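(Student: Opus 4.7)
The cleanest route is to clear denominators and verify the equivalent product identity
$$\eta\!\left(\tfrac{\tau+1}{2}\right)\eta\!\left(\tfrac{\tau}{2}\right)\eta(2\tau) \;=\; \eta(\tau)^3$$
as a relation between infinite products in the variable $q = e^{\pi i \tau}$. Using \eqref{Dedekind}, I would expand $\eta(\tau/2) = q^{1/24}\prod_{n\geq 1}(1-q^n)$, $\eta(2\tau) = q^{1/6}\prod_{n\geq 1}(1-q^{4n})$, and $\eta((\tau+1)/2) = \zeta\, q^{1/24}\prod_{n\geq 1}(1-(-1)^n q^n)$ for an appropriate $24$th root of unity $\zeta$. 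Matching powers of $q$ gives $q^{1/24+1/24+1/6} = q^{1/4}$, which is exactly the prefactor of $\eta(\tau)^3 = q^{1/4}\prod_{n\geq 1}(1-q^{2n})^3$.

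The heart of the calculation is the product identity
$$\prod_{n\geq 1}(1-q^n)(1-(-1)^n q^n)(1-q^{4n}) \;=\; \prod_{n\geq 1}(1-q^{2n})^3.$$
To verify this, I would split the first double product according to the parity of $n$: for $n$ odd the factor $(1-q^n)(1+q^n)$ equals $1-q^{2n}$, while for $n = 2m$ it becomes $(1-q^{2m})^2$. Rewriting $\prod_m(1-q^{2m})^2 = \prod_m(1-q^{4m-2})^2(1-q^{4m})^2$ and combining with the $\prod_m(1-q^{4m})$ factor from $\eta(2\tau)$ collapses everything to $\prod_m(1-q^{4m-2})^3(1-q^{4m})^3 = \prod_m(1-q^{2m})^3$, as desired.

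The main obstacle, such as it is, is the bookkeeping of the $24$th root of unity contributed by $\eta((\tau+1)/2)$: on the principal branch this phase equals $e^{\pi i/24}$ and must either be absorbed into the paper's convention for fractional arguments of $\eta$ or recorded explicitly. Equivalently, one can deduce the lemma from Weber's classical relation $\mathfrak{f}(\tau)\,\mathfrak{f}_1(\tau)\,\mathfrak{f}_2(\tau) = \sqrt{2}$, which is precisely the triple-product decomposition of $\eta(\tau)^3$ repackaged in the eta-quotient form asserted here.
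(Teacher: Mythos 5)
Your proof is correct and takes essentially the same route as the paper, which simply asserts that the identity ``follows from the product formula'' for $\eta$; you have filled in exactly the parity-splitting computation that assertion presupposes. Your flag about the $e^{\pi i/24}$ phase from $\eta\!\left(\tfrac{\tau+1}{2}\right)$ is well taken --- under the stated convention the identity as printed holds only up to that root of unity (compare Weber's $\mathfrak{f}\,\mathfrak{f}_1\,\mathfrak{f}_2=\sqrt{2}$, which carries the factor $e^{-\pi i/24}$ in $\mathfrak{f}$), and tracking it is in fact needed for the subsequent Lemma~\ref{lemma:3} to come out right.
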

\begin{proof}
	It follows from the product formula (\ref{Dedekind}) for Dedekind eta function.
\end{proof}

The following lemma follows from the previous lemma and equations (\ref{eq:1-16l}) and (\ref{eq:theta}).  

\begin{lemma}\label{lemma:3}
We have that
$$\frac{\theta(\tau+1)}{\theta(\tau)}=(1-16l(\tau))^{1/2}.$$
\end{lemma}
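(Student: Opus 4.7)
The plan is to express both sides of the asserted identity as $\eta$-quotients using (\ref{eq:theta}), (\ref{eq:1-16l}), and Lemma \ref{lemma:2}, and then pin down the overall branch and phase by a $q$-expansion comparison at $i\infty$. Starting from (\ref{eq:theta}), I would write
$$\frac{\theta(\tau+1)}{\theta(\tau)} = \left(\frac{\eta(\tau+1)}{\eta(\tau)}\right)^{10}\left(\frac{\eta(\tau/2)}{\eta(\tau/2+1/2)}\right)^4\left(\frac{\eta(2\tau)}{\eta(2\tau+2)}\right)^4.$$
The outer two factors evaluate to explicit roots of unity via the classical translation $\eta(z+1)=e^{\pi i/12}\eta(z)$, applied once for the first factor and twice for $\eta(2\tau+2)=\eta((2\tau)+1+1)$. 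Rearranging Lemma \ref{lemma:2} as $\eta(\tau/2)/\eta(\tau/2+1/2) = \eta(\tau/2)^2\eta(2\tau)/\eta(\tau)^3$ and raising to the fourth power handles the middle factor. After combining, all the $\eta$-terms reassemble into
$$\frac{\theta(\tau+1)}{\theta(\tau)} = \zeta\cdot\frac{\eta(\tau/2)^8\eta(2\tau)^4}{\eta(\tau)^{12}}$$
for some root of unity $\zeta$, and by (\ref{eq:1-16l}) the $\eta$-quotient on the right equals $(1-16l(\tau))^{1/2}$ for the principal branch.

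To show $\zeta=1$, I would compare the leading $q$-expansions at the cusp $i\infty$. Since $\theta(\tau)=1+4q+O(q^2)$, and using $q(\tau+1)=-q(\tau)$ together with $(-1)^{n^2}=(-1)^n$, also $\theta(\tau+1)=1-4q+O(q^2)$, one obtains $\theta(\tau+1)/\theta(\tau)=1-8q+O(q^2)$. On the other hand $1-16l(\tau)=1-16q+O(q^2)$, so its principal square root expands as $1-8q+O(q^2)$ as well. This forces $\zeta=1$ and completes the identification.

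The main technical annoyance is the careful bookkeeping of the various $24$th roots of unity introduced by the $T$-action on $\eta(\tau)$, $\eta(\tau/2+1/2)$, and $\eta(2\tau+2)$, each carrying its own fractional power of $e^{\pi i/12}$. The $q$-expansion comparison at $i\infty$ provides a clean way to absorb all residual phase and branch ambiguity in one step rather than tracking the roots of unity individually through the computation.
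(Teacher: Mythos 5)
Your proof is correct and follows essentially the same route as the paper, which likewise derives the identity by combining the $\eta$-quotient expressions (\ref{eq:theta}) and (\ref{eq:1-16l}) with Lemma \ref{lemma:2}. Your extra step of absorbing the residual root of unity into a constant $\zeta$ and pinning it down by comparing constant terms of the $q$-expansions at $i\infty$ is a sensible and slightly more careful way to fix the branch and phase, which the paper leaves implicit.
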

\noindent
Denote by $D=\frac{1}{\pi i}\frac{d}{d\tau}=q\frac{d}{dq}$.
We will need the following curious identities.
\begin{lemma}\label{lemma:4}
The following equality holds
$$-\frac{1}{12}D^4\left(\frac{1}{\theta(\tau)^3}\right)=(1-16l(\tau))l(\tau)\theta(\tau)^5-108(1-16l(\tau))^2l(\tau)^2\theta(\tau)^5.$$
\end{lemma}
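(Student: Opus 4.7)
The strategy is to compute $D^4(\theta^{-3})$ directly, using two inputs that reduce the identity to a polynomial manipulation.

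First, the derivative formula
\begin{equation*}
Dl = l(1-16l)\,\theta^2
\end{equation*}
follows from the classical identity for the $\tau$-derivative of the modular lambda function (recall $\lambda(\tau)=16\,l(\tau)$), or equivalently by comparing $q$-expansions of two weight-$2$ modular forms for $\Delta$. Second, from (\ref{eq}), which writes $\theta(\tau)={}_2F_1(\tfrac12,\tfrac12;1; 16\,l(\tau))$, the hypergeometric ODE for ${}_2F_1(\tfrac12,\tfrac12;1;u)$, after the substitution $u=16l$, yields the Picard-Fuchs relation
\begin{equation*}
l(1-16l)\,\theta_{ll} + (1-32l)\,\theta_l - 4\theta = 0,
\end{equation*}
where $\theta_l$ and $\theta_{ll}$ denote derivatives of $\theta$ regarded as a power series in $l$.

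Armed with these, I would apply $D$ to $\theta^{-3}$ four times via the chain rule, writing each intermediate expression as a polynomial in $l$, $\theta^{\pm 1}$ and $\theta_l$, and using the Picard-Fuchs relation to eliminate every occurrence of $\theta_{ll}$. Already after two steps one finds the clean formula
\begin{equation*}
D^2(\theta^{-3}) = \frac{6\,l^2(1-16l)^2\,\theta_l^2}{\theta} - 12\, l(1-16l)\,\theta,
\end{equation*}
the linear-in-$\theta_l$ terms having canceled exactly. The computations of $D^3$ and $D^4$ are longer but exhibit the same phenomenon: the remaining $\theta_l$ and $\theta_l^{\,2}$ contributions cancel against each other, and one arrives at
\begin{equation*}
D^4(\theta^{-3}) = -12\, l(1-16l)\bigl(1 - 108\, l + 1728\, l^2\bigr)\theta^5.
\end{equation*}

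Finally, using $1728 = 108\cdot 16$, one recognizes $1 - 108\, l + 1728\, l^2 = 1 - 108\, l(1-16l)$, whence
\begin{equation*}
-\tfrac{1}{12}D^4(\theta^{-3}) = l(1-16l)\theta^5 - 108\, l^2(1-16l)^2\theta^5,
\end{equation*}
as claimed. The only real obstacle is careful bookkeeping in the four-fold differentiation; the substantive content of the lemma is that the final expression is polynomial in $l$ and $\theta$ with no surviving $\theta_l$, a cancellation forced by the Picard-Fuchs ODE together with the factor $\theta^2$ appearing in $Dl$. No modularity or dimension argument is needed — the identity is purely formal in $\Q[l,(1-16l)^{-1},\theta,\theta_l]$ modulo the Picard-Fuchs relation.
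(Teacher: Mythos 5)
Your proof is correct, and it takes a genuinely different route from the paper. The paper's proof is a two-line argument: by Bol's theorem, $D^{4}$ applied to the weight $-3$ form $\theta^{-3}$ produces a weakly holomorphic modular form of weight $5$ for $\Delta$, so both sides of the identity live in the same finite-dimensional-modulo-poles space and it suffices to match finitely many initial Fourier coefficients. You instead carry out the four-fold differentiation formally, using $Dl=l(1-16l)\theta^2$ (Lemma \ref{lemma:5}) and the Picard--Fuchs equation $l(1-16l)\theta_{ll}+(1-32l)\theta_l-4\theta=0$ coming from (\ref{eq}) to eliminate $\theta_{ll}$ at each stage. I checked the computation: your intermediate formula $D^2(\theta^{-3})=6l^2(1-16l)^2\theta_l^2\theta^{-1}-12l(1-16l)\theta$ is right, one gets $D^3(\theta^{-3})=36P^2\theta_l\theta^2-6P^3\theta_l^3-12(1-32l)P\theta^3$ with $P=l(1-16l)$ (so the $\theta_l$ terms survive at the third stage, contrary to what a quick reading of your ``same phenomenon'' remark might suggest), and they do all cancel at the fourth stage, yielding $D^4(\theta^{-3})=-12P(1-108l+1728l^2)\theta^5=-12P(1-108P)\theta^5$ as you state. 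Your approach is longer but buys something real: it is purely formal in $q$-series, needs no appeal to Bol's theorem or to a bound on how many Fourier coefficients must be compared (a point the paper's ``one checks'' glosses over), and it explains structurally where the constant $108$ comes from, via $1-108l+1728l^2=1-108\,l(1-16l)$. The paper's approach, by contrast, is essentially instantaneous once the modularity framework is in place and generalizes immediately to finding other identities of this shape.
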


\begin{proof}
By Bol's theorem \cite{Bol}, the lefthand side of the equality is a weakly holomorphic modular form of weight 5 for group $\Delta$. One checks that  the initial Fourier coefficients of the both sides of the equality agree, hence the lemma follows.  
\end{proof}

\begin{lemma}\label{lemma:5} The following identity holds
$$D(l(\tau))=l(\tau)(1-16l(\tau))\theta(\tau)^2.$$
\end{lemma}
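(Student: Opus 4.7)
The plan is to follow the same template used in the proof of Lemma \ref{lemma:4}: recognize both sides as holomorphic weight-$2$ modular forms for $\Delta$ and verify the equality by matching initial Fourier coefficients in the finite-dimensional space $M_2(\Delta)$.

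First I would show that both sides lie in $M_2(\Delta)$. The right-hand side is an immediate consequence of the setup in Section \ref{prelim}: the functions $l$ and $1-16l$ are weight-$0$ modular functions for $\Delta$ holomorphic on $\H$ by (\ref{eq:l})--(\ref{eq:1-16l}), and $\theta^2\in M_2(\Delta)$ by (\ref{eq:theta}), so the product is a weight-$2$ holomorphic modular form for $\Delta$. For the left-hand side, differentiating the identity $l(\gamma\tau)=l(\tau)$ for $\gamma=\sm{a}{b}{c}{d}\in\Delta$ and using $d(\gamma\tau)/d\tau=(c\tau+d)^{-2}$ shows that $l'(\tau)$ transforms with weight $2$; since $l$ is holomorphic on $\H$, so is $D(l)=\frac{1}{\pi i}l'(\tau)\in M_2(\Delta)$.

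Both sides vanish at the cusp $\infty$ since their $q$-expansions begin with $q$. From $l=q-8q^2+44q^3-192q^4+\cdots$ one computes $D(l)=q-16q^2+132q^3+O(q^4)$, and expanding the right-hand side with $\theta^2=1+8q+24q^2+32q^3+\cdots$ gives the same series. Since $\dim M_2(\Delta)=\dim M_2(\Gamma(2))=2$ (the action of $-I$ is trivial in even weight, and $X(2)$ has genus $0$ with three cusps), matching these initial Fourier coefficients — well beyond the Sturm bound for $\Gamma(2)$ in weight $2$ — forces the two sides to agree.

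The only minor obstacle is producing a clean quantitative bound to turn the coefficient match into equality. If one prefers to avoid Sturm-type estimates entirely, the equivalent route is to verify that both sides vanish at all three cusps of $\Delta$ — using the divisor data for $l$, $1-16l$ and $\theta$ recorded in Section \ref{prelim} together with Lemma \ref{lemma:3} to handle the irregular cusp $1$ — placing the difference in $S_2(\Delta)$, and then invoking $\dim S_2(\Gamma(2))=0$ (the genus of $X(2)$) to conclude that the difference vanishes identically.
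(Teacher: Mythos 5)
Your transformation-law and holomorphy-on-$\H$ arguments are fine, but the proposal rests on a false premise: neither side of the identity lies in $M_2(\Delta)$. The function $l$ is a hauptmodul, hence a non-constant meromorphic function on the compact curve $X(2)$; since it is holomorphic on $\H$, it must have a pole at a cusp, and the paper's own divisor data in Section \ref{prelim} (which you propose to cite) records that $l$ and $1-16l$ each have a simple pole at one cusp. Consequently $D(l)$ has a pole at that cusp, and the right-hand side $l(1-16l)\theta^2$ has order $-1-1+1=-1$ there (using the order-$1/2$ vanishing of $\theta$ at the irregular cusp), i.e.\ a simple pole as well. Both sides are only \emph{weakly} holomorphic modular forms of weight $2$ --- which is exactly how the paper phrases it, via Bol's theorem. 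This breaks both of your closing arguments: the appeal to $\dim M_2(\Delta)=2$ does not apply, and the fallback ``both sides vanish at all three cusps, so the difference lies in $S_2(\Delta)=0$'' fails at the polar cusp, where neither side vanishes or is even finite.

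The argument can be repaired, but it requires working in the correct space. One option is to note that the difference $G=D(l)-l(1-16l)\theta^2$ is a weight-$2$ weakly holomorphic form, holomorphic on $\H$, with at worst a simple pole at the single bad cusp, and then apply the valence formula (total divisor degree equals $1$ for weight $2$ on $X(2)$) to conclude that if $G$ vanishes to sufficiently high order at $\infty$ --- here order $\ge 3$ suffices, since otherwise the degree count $\ge 3-1+0>1$ is violated --- then $G\equiv 0$. Alternatively one can compare principal parts at the polar cusp directly. Either way, the quantitative step you flag as ``the only minor obstacle'' is not a Sturm bound for holomorphic forms but a valence-formula bound that must account for the cuspidal pole; as written, your proof does not close.
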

\begin{proof}
By Bol's theorem \cite{Bol}, $D(l(\tau))$ is a weakly holomorphic modular form of weight 2. The initial Fourier coefficients of both forms agree, hence the claim follows.
\end{proof}


\section{Proofs of the theorems}

\subsection{Proof of Theorem \ref{thm:1} and Theorem \ref{thm:2}}

\begin{proof}[Proof of Theorem \ref{thm:1}]
It is easy to check that $h_n(\tau)$ vanishes at cusps (see Section \ref{prelim}). 
Denote by $\nu(\tau)=\theta(\tau)^{12}(1-16l(\tau))l(\tau)^4$.
If $n=2k$ is even, then $h_n(\tau)=\theta(\tau)\nu(\tau)^k$. From Lemma \ref{lemma:1}, it follows that $\nu(\tau)$ has Fourier coefficients supported at integers that are congruent $0\bmod{4}$. 
Since $\theta(\tau)=\sum_{i=0}^\infty r_2(i)q^i$ has a property that $r_2(4j-1)=0$ for $j\in \N$, the claim follows.

If $n=2k+1$ is odd, then $h_n(\tau)=h_1(\tau)\nu(\tau)^k$. Therefore, it is enough to prove the statement of the theorem for $h_1(\tau)$.
Since, by Lemma \ref{lemma:1}, the Fourier coefficients of $\theta(\tau)^6(1-16l(\tau))^{1/2}l(\tau)^2$ are supported at integers that are congruent to $2\bmod{4}$, it follows that the coefficients of $\theta(\tau)^7(1-16l(\tau))^{1/2}l^2(\tau)$ are supported at integers congruent to $3\bmod{4}$. The same is true for the coefficients of $h_1(\tau)$ since 
Lemma \ref{lemma:1} and Lemma \ref{lemma:3} imply that $$h_1(\tau)= \theta(\tau + 1)^7(1-16l(\tau+1))^{1/2}l(\tau+1)^2.$$ 
\end{proof}

\begin{proof}[Proof of Theorem \ref{thm:2}]
a) It is easy to check that $f_1(\tau)$ is a cusp form (see Section \ref{prelim}). 
The space of cusp forms $S_5(\Delta)$ is one dimensional (see equation (\ref{eqn:dim})). Since it contains a CM modular form with the stated properties, the claim follows.\newline
b) It follows from Lemma \ref{lemma:4}.
\newline
c) Same as the proof of Theorem \ref{thm:1}.
\end{proof}

\subsection{Proof of Corollary \ref{cor:1} and Corollary \ref{cor:2}}

We need the following result of Beukers \cite{Beukers}.

\begin{proposition}[Beukers]\label{proposition:Beukers}
Let $p$ be a prime and 
$$\omega(t)=\sum_{n=1}^\infty b_nt^{n-1}dt$$
a differential form with $b_n\in \Z_p$. Let $t(q)=\sum_{n=1}^\infty A_nq^n$,$A_n\in \Z_p$, and suppose
$$\omega(t(q))=\sum_{n=1}^\infty c_n q^{n-1}dq.$$
Suppose there exist $\alpha_p,\beta_p\in \Z_p$ with $p|\beta_p$ such that 
$$b_{mp^r}-\alpha_p b_{mp^{r-1}}+\beta_pb_{mp^{r-2}}\equiv 0 \pmod{p^r}, \quad \forall m,r\in \N.$$
Then
$$c_{mp^r}-\alpha_p c_{mp^{r-1}}+\beta_pc_{mp^{r-2}}\equiv 0 \pmod{p^r}, \quad \forall m,r\in \N.$$
Moreover, if $A_1$ is $p$-adic unit then the second congruence implies the first.
\end{proposition}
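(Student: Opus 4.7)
The plan is to pass from the sequences $(b_n), (c_n)$ to their formal antiderivatives $F(t) = \sum_{n \ge 1} (b_n/n)\, t^n$ and $G(q) = \sum_{n \ge 1} (c_n/n)\, q^n$ in $\Q_p[[t]]$ and $\Q_p[[q]]$ respectively. Since $\omega = dF$ and $\omega(t(q)) = dG$ both vanish at the origin, one has the identity $G(q) = F(t(q))$. The next step is to reformulate the three-term congruences as $p$-integrality statements: a direct coefficient computation shows that for every $m$ coprime to $p$ and every $r \ge 0$, the coefficient of $t^{mp^r}$ in
$$\widetilde{F}(t) \,:=\, F(t) - \frac{\alpha_p}{p}\, F(t^p) + \frac{\beta_p}{p^2}\, F(t^{p^2})$$
equals $(mp^r)^{-1}\bigl(b_{mp^r} - \alpha_p b_{mp^{r-1}} + \beta_p b_{mp^{r-2}}\bigr)$, with the convention that $b_k := 0$ for non-positive-integer indices. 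Hence the Atkin--Swinnerton-Dyer congruence on $(b_n)$ is equivalent to $\widetilde{F}(t) \in \Z_p[[t]]$, and the analogous statement holds for $\widetilde{G}(q)$ and $(c_n)$. The theorem thus reduces to proving the implication $\widetilde{F}(t) \in \Z_p[[t]] \Longrightarrow \widetilde{G}(q) \in \Z_p[[q]]$, together with its converse under the additional hypothesis $A_1 \in \Z_p^\times$.

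Expanding $G(q) = F(t(q))$ gives
$$\widetilde{G}(q) - \widetilde{F}(t(q)) \,=\, -\frac{\alpha_p}{p}\bigl[F(t(q^p)) - F(t(q)^p)\bigr] + \frac{\beta_p}{p^2}\bigl[F(t(q^{p^2})) - F(t(q)^{p^2})\bigr],$$
and $\widetilde{F}(t(q)) \in \Z_p[[q]]$ since $t(q) \in \Z_p[[q]]$. The technical core is the following elementary lemma: if $u, v \in q\Z_p[[q]]$ satisfy $u \equiv v \pmod{p^k}$, then $L(u) - L(v) \in p^k \Z_p[[q]]$ for any $L(x) = \sum (b_n/n) x^n$ with $b_n \in \Z_p$. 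For $n = mp^r$, the factorization $u^n - v^n = (u^m)^{p^r} - (v^m)^{p^r}$, together with the elementary fact that raising to a $p$-th power improves a mod-$p^j$ congruence to mod-$p^{j+1}$, yields $u^n - v^n \in p^{k+r}\Z_p[[q]]$, which cancels the denominator $n = mp^r$ down to $p^k \Z_p[[q]]$. Applied with the Frobenius congruences $t(q^p) \equiv t(q)^p \pmod p$ and (by iterating the same $p$-th-power lifting argument) $t(q^{p^2}) \equiv t(q)^{p^2} \pmod p$, the lemma puts each bracketed difference in $p\Z_p[[q]]$. The first contribution is then $\alpha_p \Z_p[[q]] \subseteq \Z_p[[q]]$, and the second lies in $(\beta_p/p)\Z_p[[q]] \subseteq \Z_p[[q]]$ precisely because $p \mid \beta_p$; hence $\widetilde{G}(q) \in \Z_p[[q]]$.

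For the converse, when $A_1 \in \Z_p^\times$ the series $t(q)$ admits a compositional inverse $q(t) \in \Z_p[[t]]$ with $p$-integral coefficients, so $F(t) = G(q(t))$ and the above argument applies verbatim with $t$ and $q$ interchanged. The main obstacle is the valuation bookkeeping in the middle paragraph: the Frobenius congruence for $t(q^{p^2})$ only delivers mod-$p$ (not mod-$p^2$) information, so the hypothesis $p \mid \beta_p$ is essential to absorb the extra factor of $p$ in the denominator of the $\beta_p/p^2$ term. Once the elementary lemma on $L(u) - L(v)$ is in place, everything else is a straightforward comparison of the two Frobenius-twisted series.
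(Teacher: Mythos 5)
The paper does not prove this proposition; it is quoted from Beukers \cite{Beukers} without proof, so there is no in-paper argument to compare against. Your proof is correct and is essentially the standard (indeed Beukers' original) argument: recasting the Atkin--Swinnerton-Dyer congruences as $p$-integrality of $F(t)-\frac{\alpha_p}{p}F(t^p)+\frac{\beta_p}{p^2}F(t^{p^2})$, using the Frobenius congruence $t(q^p)\equiv t(q)^p \pmod p$ together with the $p$-th-power lifting lemma, and correctly locating where the hypothesis $p\mid\beta_p$ is needed; the converse via the $\Z_p$-integral compositional inverse when $A_1\in\Z_p^{\times}$ is also handled properly.
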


\begin{lemma}\label{cor:Beukers}
Define a differential form
$$\omega(t)=\sum_{n=1}^\infty b_nt^{n-1}dt,$$ where $b_n\in \Z_p$. Let $t(q)=\sum_{n=1}^\infty A_nq^n$, with $A_n\in \Z_p$ and $A_1 = 1$. Define
$$\omega(t(q))=\sum_{n=1}^\infty c_n q^{n-1}dq.$$
For a prime $p$, we have that $b_p \equiv c_p \pmod{p}$.
\end{lemma}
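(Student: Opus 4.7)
The plan is to extract the coefficient of $q^{p-1}dq$ in $\omega(t(q))$ directly and compare it term by term to $b_p$ modulo $p$. The cleanest way is to formally antidifferentiate $\omega$.

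First I would set $F(t) = \sum_{n=1}^{\infty} \frac{b_n}{n} t^n \in \Q_p[[t]]$, so that $dF = \omega$. Composing with $t(q)$ gives $F(t(q)) = \sum_{m=1}^\infty \frac{b_m}{m} t(q)^m$, and on the other hand, since $\omega(t(q)) = \sum_{n\ge 1} c_n q^{n-1}dq$, we also have $F(t(q)) = \sum_{n=1}^{\infty} \frac{c_n}{n} q^n$ (up to an irrelevant additive constant). Equating the two expansions yields, for every $n\ge 1$,
\begin{equation*}
\frac{c_n}{n} = \sum_{m=1}^{n} \frac{b_m}{m}\,[q^n]\,t(q)^m,
\end{equation*}
where the sum terminates at $m=n$ because $t(q) = q + O(q^2)$ forces $[q^n]t(q)^m = 0$ when $m>n$.

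Next I specialize to $n=p$ and multiply through by $p$:
\begin{equation*}
c_p = b_p\cdot [q^p]t(q)^p + \sum_{m=1}^{p-1} \frac{p\, b_m}{m}\,[q^p]\,t(q)^m.
\end{equation*}
Since $t(q) = q + A_2 q^2 + \cdots$ with $A_1 = 1$, the leading term gives $[q^p]t(q)^p = A_1^p = 1$, so the first summand is simply $b_p$. For $1 \le m \le p-1$, the coefficient $[q^p]t(q)^m$ is a polynomial with integer coefficients in $A_1,\dots,A_{p-m+1}$, hence lies in $\Z_p$; and because $\gcd(m,p)=1$, the factor $p/m$ lies in $p\Z_p$. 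Therefore each term of the remaining sum lies in $p\Z_p$, and $c_p \equiv b_p \pmod{p}$.

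The argument is essentially bookkeeping — the only subtle point is the appearance of $1/n$ in the antiderivative, which could a priori introduce denominators divisible by $p$. That is precisely why the calculation is organized around $n=p$: the dangerous denominator $1/p$ appears exactly once, paired with $b_p$, while for the indices $m<p$ the factor $p/m$ safely kills any integrality loss coming from the antiderivative. No deeper input (e.g.\ formal groups, or the full Proposition of Beukers) is needed for this statement; it is the elementary ``first congruence'' behind that proposition.
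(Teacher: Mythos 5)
Your proof is correct and is essentially the same as the paper's: the paper also antidifferentiates to $\sum_{n\ge 1}\frac{b_n}{n}t(q)^n$ and observes that modulo $p$ only the $n=p$ term contributes to $c_p$, yielding $b_p$. Your version just spells out explicitly why the $m<p$ terms vanish mod $p$ (the factor $p/m\in p\Z_p$) and why $[q^p]t(q)^p=1$.
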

\begin{proof} We have that
$$\sum_{n=1}^\infty c_n q^{n-1} = \frac{d}{dq}\left(\sum_{n=1}^\infty \frac{b_n}{n}t(q)^n\right).$$ Hence $c_p \bmod{p}$ is equal to the $(p-1)^{th}$ coefficient of $\displaystyle\frac{d}{dq}\left(\frac{b_p}{p}t(q)^p\right)$, which is $b_p$.
\end{proof}

\begin{proof}[Proof of Corollary \ref{cor:1}]
Since $f_1(\tau)$ is an eigenform for the Hecke operator $T_p$, the assumptions of Proposition \ref{proposition:Beukers} are satisfied for $c_n = b_1(n)$, $b_n=A_3(n-1)$,  $\alpha_p=b_1(p)$, $\beta_p=\left(\frac{-1}{p}\right) p^4$, and $t(q)=16l(\tau)$. The formula (\ref{eq:3})  follow from identity
$$D(l(\tau))\Fl^3=f_1(\tau),$$
(which is a consequence of equation (\ref{eq}) and Lemma \ref{lemma:5}).

To prove (\ref{eq:4}) we use Theorem \ref{thm:2} b) to establish three term congruence relation from Proposition \ref{proposition:Beukers} between coefficients $c_1(np^r)$ (because there is one satisfied by $b_1(np^r)$'s). Note that assumptions of Proposition \ref{proposition:Beukers} are satisfied if we take,  when $p\equiv 3 \pmod{4}$,  $c_n = c_1(n)$, $b_n=D_3(n-1)$,  $\alpha_p=b_1(p)$, $\beta_p=\left(\frac{-1}{p}\right) p^4$, and $t(q)=16l(\tau)$ (since then $b(p)=0$).

If $p\equiv 1 \pmod{4}$, we need to take $c_n = pc_1(n)$, $b_n=pD_3(n-1)$,  $\alpha_p=b_1(p)$, $\beta_p=\left(\frac{-1}{p}\right) p^4$, and $t(q)=16l(\tau)$, hence we get a congruence relation weaker for one power of $p$.

Formulas (\ref{eq:1}) and (\ref{eq:2}) follow from Lemma \ref{cor:Beukers} with the choice of parameters as above.

\end{proof}

\begin{proof}[Proof of Corollary \ref{cor:2}]
We apply Lemma \ref{cor:Beukers} to cusp forms $f_{n}(\tau)$ and $h_{n}(\tau)$, where we choose $t(q)$ to be the inverse of $l(q)$ under composition. The claim follows from $a_n(p)=b_n(p)=0$.
\end{proof}

\bibliographystyle{siam}
\bibliography{bibl}

\end{document}